\newtheorem{theorem}{Theorem}[section]
\newtheorem{lemma}[theorem]{Lemma}
\newtheorem{corollary}[theorem]{Corollary}
\theoremstyle{definition}
\theoremstyle{remark}
\newtheorem{remark}[theorem]{Remark}
\numberwithin{equation}{section}
\begin{document}

\setcounter{page}{1}

\title[Local and $2$-local automorphisms of simple generalized Witt algebras]{Local and $2$-local automorphisms of simple generalized Witt algebras}

\author{Yang Chen}
\address{Mathematics Postdoctoral Research Center,
Hebei Normal University, Shijiazhuang 050016, Hebei, China}
\email{chenyang1729@hotmail.com}

\author{Kaiming Zhao}
\address{Department of Mathematics, Wilfrid
Laurier University, Waterloo, ON, Canada N2L 3C5,  and School of Mathematical Sciences, Hebei Normal University, Shijiazhuang 050016, Hebei, China}
\email{kzhao@wlu.ca}

\author{Yueqiang Zhao}
\address{School of Mathematical Sciences, Hebei Normal University, Shijiazhuang 050016, Hebei, China}
\email{yueqiangzhao@163.com}

\date{}
\maketitle

\begin{abstract} In this paper, we prove that every invertible $2$-local or local automorphism of a simple generalized Witt algebra over any field of characteristic $0$ is an automorphism. In particular,  every $2$-local or local automorphism of Witt algebras $W_n$ is an automorphism for all $n\in \mathbb{N}$. But some simple generalized Witt algebras indeed  have  $2$-local (and  local) automorphisms that are not automorphisms.

\

{\it Keywords:} Lie algebra, generalized Witt algebra, automorphism, local automorphism, $2$-local automorphism.

{\it AMS Subject Classification:} 17B05, 17B40, 17B66.
\end{abstract}

\section{Introduction}

Let $\mathfrak{A}$ be an associative algebra. A linear operator $\Phi$ on $\mathfrak{A}$  is called a \textit{local automorphism} if for every $x\in \mathfrak{A}$ there exists an automorphism $\theta_x$ of $\mathfrak{A}$ , depending on $x$, such that $\Phi(x)= \theta_x(x)$. The concept of local automorphism was introduced   by 
Larson and Sourour \cite{LarSou} in 1990.
In \cite{LarSou} the authors actually proved that, invertible local automorphisms of the algebra of all bounded linear operators on an infinite-dimensional  Banach space  $X$ are  automorphisms, and the automorphisms and anti-automorphisms of the associative algebra $M_n(\mathbb{C})$ of complex $n\times n$ matrices exhaust all its local automorphisms. On the other hand, it was proved in \cite{Cri} that a commutative subalgebra of $M_3(\mathbb{C})$ has a local automorphism which is not an automorphism.

In 1997, \v{S}emrl \cite{Sem} introduced the notion of $2$-local automorphisms of algebras. Namely, a map $\Phi : \mathfrak{A}\to \mathfrak{A}$
(not necessarily linear) is called a \textit{$2$-local
automorphism} if for every $x,y\in \mathfrak{A}$, there exists an automorphism $\theta_{x,y} : \mathfrak{A}\to \mathfrak{A}$ such that
$\Phi(x)= \theta_{x,y} (x)$ and $\Phi(y)= \theta_{x,y}(y)$. Similarly, we have the definitions of local and $2$-local derivations. These concepts are actually important and interesting properties for an algebra.

Recently, several papers have been devoted to similar notions and corresponding problems for Lie (super)algebras $L$. The main problem in this subject is to determine all local and $2$-local automorphisms (resp. local and $2$-local derivations), and to see  whether every local or $2$-local automorphism (resp. local or $2$-local derivation) automatically becomes an automorphism (resp. a derivation) of $L$,  that is, whether automorphisms  (resp.  derivations) of an   algebra can be  completely determined by their local actions. In \cite{CW}, Chen and Wang initiated study of $2$-local automorphisms of finite-dimensional Lie algebras. They proved that if $L$ is a simple Lie algebra of type $A_l\ (l\geq 1)$, $D_l\ (l\geq 4)$ or $E_k\ (k= 6,7,8)$ over an algebraically closed field of characteristic zero, then every $2$-local automorphism of $L$ is an automorphism. This result was extended to any finite dimensional semisimple Lie algebra in \cite{AyuKudRak}.

 For local automorphisms of Lie algebras it was only known that the automorphisms and the anti-automorphisms of finite dimensional simple Lie algebra exhaust all its local automorphisms in \cite{Cos}. For infinite dimensional Lie algebras, $2$-local derivations on Witt algebras were determined using different approaches in \cite{AKY, Zhao}. We determined all  local derivations on Witt algebras in \cite{CZZ}. The present paper proves that every invertible $2$-local or local automorphism of a simple generalized Witt algebra is an automorphism.

Witt algebras were one of the four classes of Cartan type Lie algebras originally introduced in 1909 by Cartan \cite{C} when he studied
infinite dimensional simple Lie algebras over complex numbers. Generalized Witt algebras were defined by Kaplansky \cite{Kap} in the context of the classification problem of simple finite dimensional Lie algebras over fields of prime characteristic. The definition of generalized Witt algebras over fields of characteristic 0 was given by N. Kawamoto \cite{Kaw}. Using different notations from Kawamoto's, Djokovic and Zhao \cite{DZ} gave an essentially equivalent definition of generalized Witt algebras. Over the last two decades, the  representation theory of generalized Witt algebras  over complex numbers was  extensively
studied by many mathematicians and physicists; see for example \cite{BF, BMZ, GLLZ}. Very recently,  Billig and Futorny
\cite{BF}   obtained the classification for all simple Harish-Chandra modules over Lie algebra $W_n$ of vector fields on an $n$-dimensional torus, a well-known generalized Witt algebra.

The paper is organized as follows. In Section 2 we recall some known results  and establish some related properties concerning generalized Witt algebras  over any field of characteristic $0$. In Section 3 we prove that every invertible $2$-local or local automorphism of simple generalized Witt algebra is an automorphism.  In particular,  every $2$-local or local automorphism of Witt algebras $W_n$ is an automorphism for all $n\in \mathbb{N}$. But some simple generalized Witt algebras indeed  have  $2$-local (and  local) automorphisms that are not automorphisms. 

Throughout this paper, we denote by $\mathbb{Z}$, $\mathbb{N}$ and $\mathbb{C}$ the sets of all integers, positive integers and complex numbers respectively.
\medskip

\section{The generalized Witt algebras}

\medskip

In this section we recall definitions, symbols and establish some auxiliary
results for later use in this paper.

Recall from \cite{DZ}. Let $A$ be an abelian group, $\mathbb{F}$ a field of characteristic $0$, and $T$ a vector space over $\mathbb{F}$. The group algebra $\mathbb{F}A$ of $A$ over $\mathbb{F}$ is spanned by basis elements $t^\alpha$, $\alpha\in A$, and the multiplication of $\mathbb{F}A$ is defined by $t^\alpha t^\beta= t^{\alpha+ \beta}$, $\alpha, \beta\in A$. We shall write $1$ instead of $t^0$. The tensor product $W= \mathbb{F}A\otimes_{\mathbb{F}}T$ is a free left $\mathbb{F}A$-module. For the sake of simplicity, we shall write $t^\alpha\partial$ instead of $t^\alpha\otimes\partial$, $\alpha\in A$, $\partial\in T$. If a given mapping $\varphi: T\times A\rightarrow \mathbb{F}$ is $\mathbb{F}$-linear in the first variable and additive in the second one, then the bracket
$$[t^\alpha\partial_\alpha,t^\beta\partial_\beta]:= t^{\alpha+\beta}(\varphi(\partial_\alpha, \beta)\partial_\beta- \varphi(\partial_\beta, \alpha)\partial_\alpha),\ \alpha,\beta\in A,\ \partial_\alpha,\partial_\beta\in T$$
defines an infinite dimensional Lie algebra on the tensor product $W$. We shall refer to $W= W(A,T,\varphi)$ as a \textit{generalized Witt algebra}. We now introduce an $A$-gradation of $W$ by setting $W_\alpha= t^\alpha T$ for $\alpha\in A$. This gradation is compatible with the Lie algebra structure, i.e., $[W_\alpha, W_\beta]\subseteq W_{\alpha+ \beta}$ for all $\alpha, \beta\in A$. Consequently $T= W_0$ is the Cartan subalgebra in $W$. We say that $\varphi$ is \textit{nondegenerate} if
$$\varphi(\partial, \alpha)= 0,\ \forall\partial\in T\Rightarrow \alpha= 0$$
and
$$\varphi(\partial, \alpha)= 0,\ \forall\alpha\in A\Rightarrow \partial= 0.$$

The following theorem is due to Kawamoto \cite{Kaw}.
\begin{theorem}\label{thm21} \textit{Suppose that characteristic of $\mathbb{F}$ is $0$. Then $W= W(A,T,\varphi)$ is a simple Lie algebra if and only if $A\neq 0$ and $\varphi$ is nondegenerate.}
\end{theorem}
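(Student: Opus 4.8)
The plan is to prove both implications by exploiting the $A$-grading together with the adjoint action of the Cartan subalgebra $T=W_0$. The single computation I would record first is, for $\partial\in T$ and $t^\alpha\partial_\alpha\in W_\alpha$,
$$[\partial,t^\alpha\partial_\alpha]=\varphi(\partial,\alpha)\,t^\alpha\partial_\alpha,$$
(using $\varphi(\cdot,0)=0$), which shows that $\mathrm{ad}(\partial)$ acts diagonally and that every vector of $W_\alpha$ is a weight vector with weight $\lambda_\alpha:=\varphi(\cdot,\alpha)\in T^{*}$. The first nondegeneracy condition is precisely the assertion that the group homomorphism $\alpha\mapsto\lambda_\alpha$ is injective, hence that distinct homogeneous components carry distinct weights.

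For the main (``if'') direction I would take a nonzero ideal $I$ and first show it is homogeneous. Writing $0\neq x=\sum_{\alpha\in S}x_\alpha$ with $S$ finite and $x_\alpha\in W_\alpha\setminus\{0\}$, the weights $\lambda_\alpha$ $(\alpha\in S)$ are pairwise distinct by condition 1; since $\mathbb{F}$ is infinite I can choose $\partial\in T$ separating them, and then a Vandermonde argument applied to $x,\mathrm{ad}(\partial)x,\mathrm{ad}(\partial)^2x,\dots$ forces each $x_\alpha\in I$, so $I=\bigoplus_\alpha(I\cap W_\alpha)$. The rest is a bootstrap using condition 2: starting from a single nonzero homogeneous $t^\gamma\partial_\gamma\in I$, bracketing with a suitable $t^\beta\partial_\beta$ (with $\varphi(\partial_\gamma,\beta)\neq0$, available by condition 2) and letting the free direction vary yields an entire graded piece $W_\delta\subseteq I$ with $\delta\neq0$; bracketing $W_\delta$ against $W_{-\delta}$ then produces, after a rank-one-operator invertibility check (which uses characteristic $0$), all of $T$; and finally $[\partial,t^\alpha\partial_\alpha]=\varphi(\partial,\alpha)t^\alpha\partial_\alpha$ with $\varphi(\partial,\alpha)\neq0$ recovers every $W_\alpha$. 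Hence $I=W$, and $W$ is non-abelian because nondegeneracy forces $\varphi\not\equiv0$, so $W$ is simple.

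For the converse I argue contrapositively, exhibiting a proper nonzero ideal whenever $A=0$ or $\varphi$ is degenerate. If $A=0$ then $W=T$ is abelian. If there is $0\neq\partial_0$ with $\varphi(\partial_0,\cdot)=0$, a direct bracket computation shows $\mathbb{F}A\,\partial_0$ is a proper nonzero ideal. The least obvious case is $\varphi(\cdot,\alpha_0)=0$ for some $\alpha_0\neq0$: here I would observe that multiplication by $t^{\alpha_0}$, the map $\mu(t^\alpha\partial)=t^{\alpha+\alpha_0}\partial$, satisfies $[\mu x,y]=[x,\mu y]=\mu[x,y]$, i.e.\ $\mu$ lies in the centroid of $W$; consequently $\mathrm{im}(1-\mu)=(1-t^{\alpha_0})\mathbb{F}A\otimes T$ is an ideal, and it is proper and nonzero because $1-t^{\alpha_0}$ is a nonzero non-unit of $\mathbb{F}A$.

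I expect the main obstacle to be the homogeneity step for ideals in the ``if'' direction: making the simultaneous-diagonalization/Vandermonde argument precise requires condition 1 (to separate weights) and an infinite base field, and it is the point where the grading and the torus action must be combined carefully. In the converse, the one genuinely clever move is recognizing the centroid element $\mu_{\alpha_0}$ that manufactures an ideal out of a degeneracy in the $A$-variable; the remaining cases are routine.
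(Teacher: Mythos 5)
The paper offers no proof to compare against: Theorem \ref{thm21} is quoted from Kawamoto \cite{Kaw} without argument, so your proposal must stand on its own --- and it does. Your route is essentially the classical one: homogenize ideals via the adjoint $T$-action (the identity $[\partial,t^\alpha\partial_\alpha]=\varphi(\partial,\alpha)t^\alpha\partial_\alpha$, injectivity of $\alpha\mapsto\varphi(\cdot,\alpha)$ from the first nondegeneracy condition, a separating $\partial\in T$ --- which exists because a vector space over an infinite field is not a finite union of proper subspaces --- and Vandermonde), then bootstrap a nonzero homogeneous element up to all of $W$, and in the converse exhibit explicit ideals; the centroid element $\mu$ with $\mathrm{im}(1-\mu)=(1-t^{\alpha_0})\mathbb{F}A\otimes T$ is exactly the right device for a degeneracy in the $A$-variable, and properness follows since $1-t^{\alpha_0}$ is killed by the augmentation map, hence a non-unit. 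Two small points need patching, neither fatal. First, in the bootstrap step, for fixed $\beta$ the map $\partial\mapsto\varphi(\partial_\gamma,\beta)\partial-\varphi(\partial,\gamma)\partial_\gamma$ is scalar-plus-rank-one and is surjective onto $T$ only when $\varphi(\partial_\gamma,\beta)\neq\varphi(\partial_\gamma,\gamma)$, so ``letting the free direction vary'' can fail for the initially chosen $\beta$; the cheap fix is to replace $\beta$ by $k\beta$ with $k\in\mathbb{N}$, since additivity gives $\varphi(\partial_\gamma,k\beta)=k\varphi(\partial_\gamma,\beta)$ and in characteristic $0$ these values are pairwise distinct, so all but finitely many $k$ make the map surjective and also guarantee $\delta=\gamma+k\beta\neq 0$. (Your characteristic-$0$ check at the $[W_\delta,W_{-\delta}]$ stage --- the factor $2\varphi(\partial,\delta)\neq 0$ --- is correct as stated.) Second, in the converse, when the second nondegeneracy condition fails, $\mathbb{F}A\,\partial_0$ is a \emph{proper} ideal only if $\dim T\geq 2$; in the edge case $T=\mathbb{F}\partial_0$ one has $\varphi\equiv 0$, so $W$ is abelian and non-simple anyway --- this deserves one explicit line. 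With these two sentences added, your argument is a complete and correct proof of Kawamoto's theorem.
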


From now on, we shall assume that the characteristic of $\mathbb{F}$ is $0$ and $W(A,T,\varphi)$ is simple. Then $A$ must be torsion-free. If $A$ is finitely generated, then $A$ is isomorphic to $\mathbb{Z}^n$, $n\in \mathbb{N}$ with standard basis $\{\epsilon_1,\ldots,\epsilon_n\}$.
When we write $t_i$ instead of $t^{\epsilon_i}$, the group algebra $\mathbb{F}A$ becomes identified with the Laurent polynomial algebra $\mathbb{F}[t_1^{\pm 1},\ldots,t_n^{\pm 1}]$ over $\mathbb{F}$. Let $T$ be a $m$-dimensional vector space $V_m$, $m\in \mathbb{N}$
with a basis $\{\partial_1,\ldots,\partial_m\}$. Note that $n\geq m$, otherwise $\varphi$ is degenerate.
The corresponding generalized Witt algebra is denoted by $W(\mathbb{Z}^n,V_m,\varphi)$.
All generalized Witt algebras $W(\mathbb{Z}^n,V_n,\varphi)$ are isomorphic,
so we may define the mapping $\varphi:\ T\times A\rightarrow \mathbb{F}$ by setting $\varphi(\partial_i, \epsilon_i)= \delta_{ij}$. If we interpret $\partial_i$ as the differential operator
 $t_i\frac{\partial}{\partial t_i}$, then $W(\mathbb{Z}^n,V_n,\varphi)$ can be identified with the Lie algebra
$W_n= \text{Der}(\mathbb{F}[t_1^{\pm1},\ldots,t_n^{\pm1}])$ of derivations of the Laurent polynomial algebra $\mathbb{F}[t_1^{\pm1},\ldots,t_n^{\pm1}]$ over $\mathbb{F}$.
Specially, if $\mathbb{F}= \mathbb{C}$, then $W(\mathbb{Z}^n,V_n,\varphi)$ is
 the Lie algebra of vector fields on an $n$-dimensional torus.
 We can obtain similar definition and formula for the generalized Witt algebra $W_\infty= \rm{Der}(\mathbb{F}[t_1^\pm,t_2^\pm,\cdots])$.

An automorphism of a Lie algebra $L$ is an invertible linear map
$\theta: L\rightarrow L$ which satisfies
$$
\theta([x,y])=[\theta(x), \theta(y)],\
\forall x,y\in L.$$ The set of all automorphisms of
$L$ is denoted by $\text{Aut}(L)$.

In \cite[Section 5]{DZ}, Djokovic and Zhao gave explicit formulas of the automorphisms of simple generalized Witt algebras.

\begin{theorem}\label{thm22} \textit{Suppose that $W= W(A,T,\varphi)$ is simple and $\theta\in \rm{Aut}(W)$. Then
$$\theta(t^\alpha \partial_\alpha):= \theta(\chi,\sigma,\tau)(t^\alpha \partial_\alpha)= \chi(\alpha)t^{\sigma(\alpha)}\tau(\partial_\alpha),\ \alpha\in A,\ \partial_\alpha\in T,$$
where $\chi$ is a character of $A$, and $\sigma\in \rm{Aut}(A)$, $\tau\in \rm{GL}(T)$ satisfy $\varphi(\tau(\partial), \sigma(\gamma))= \varphi(\partial, \gamma)$ for any $\partial\in T,\ \gamma\in A$.}
\end{theorem}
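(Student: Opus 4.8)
The plan is to show that any $\theta\in\mathrm{Aut}(W)$ must respect the $A$-grading, and then to read off the triple $(\chi,\sigma,\tau)$ from this. The whole argument pivots on the intrinsic role of the Cartan subalgebra $T=W_0$, and the first thing I would prove is the key structural fact that $\theta(T)=T$. For this one needs an automorphism-invariant description of $T$. Every $\partial\in T$ acts by $\mathrm{ad}(\partial)|_{W_\alpha}=\varphi(\partial,\alpha)\,\mathrm{id}$, so $\mathrm{ad}(\partial)$ is diagonalizable and in particular locally finite; conversely, a short computation with the bracket shows that any element with a nonzero homogeneous component of nonzero degree has an adjoint action that shifts degrees indefinitely and is therefore not locally finite. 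Thus $T$ should coincide with the set of ad-locally-finite elements of $W$ (equivalently, $T$ is a distinguished maximal ad-diagonalizable subalgebra), and such a set is preserved by every automorphism. I expect this to be the main obstacle: establishing the characterization rigorously for \emph{all} characteristic-$0$ fields and \emph{all} torsion-free groups $A$ is delicate, since over a small field $\mathbb{F}$ or for an infinitely generated $A$ one cannot separate all of $A$ by a single regular element. To handle the general case I would reduce to finitely generated subgroups of $A$ by exhaustion and, where convenient, extend scalars to $\overline{\mathbb{F}}$, argue there, and descend.

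Granting $\theta(T)=T$, set $\tau:=\theta|_T\in\mathrm{GL}(T)$. Since $\theta\,\mathrm{ad}(x)\,\theta^{-1}=\mathrm{ad}(\theta(x))$, the automorphism conjugates $\mathrm{ad}(T)$ onto $\mathrm{ad}(T)$ and hence permutes the joint eigenspaces of $\mathrm{ad}(T)$. These eigenspaces are exactly the homogeneous components $W_\alpha$, labelled by the weight functionals $\partial\mapsto\varphi(\partial,\alpha)$, and the first nondegeneracy condition on $\varphi$ guarantees that distinct $\alpha$ give distinct weights. Therefore there is a bijection $\sigma\colon A\to A$ with $\theta(W_\alpha)=W_{\sigma(\alpha)}$ for every $\alpha$. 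Matching the weight of $\theta(W_\alpha)$ against that of $W_{\sigma(\alpha)}$ produces the compatibility identity $\varphi(\tau(\partial),\sigma(\gamma))=\varphi(\partial,\gamma)$ for all $\partial\in T$, $\gamma\in A$; and applying $\theta$ to the inclusions $[W_\alpha,W_\beta]\subseteq W_{\alpha+\beta}$, which are nonzero for suitable arguments thanks to nondegeneracy, forces $\sigma$ to be additive, so $\sigma\in\mathrm{Aut}(A)$.

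It remains to produce the character $\chi$. As $\theta$ maps $W_\alpha=t^\alpha T$ isomorphically onto $W_{\sigma(\alpha)}=t^{\sigma(\alpha)}T$, I can write $\theta(t^\alpha\partial)=t^{\sigma(\alpha)}\tau_\alpha(\partial)$ for uniquely determined $\tau_\alpha\in\mathrm{GL}(T)$ with $\tau_0=\tau$. Feeding the defining relation $[t^\alpha\partial_\alpha,t^\beta\partial_\beta]=t^{\alpha+\beta}(\varphi(\partial_\alpha,\beta)\partial_\beta-\varphi(\partial_\beta,\alpha)\partial_\alpha)$ through $\theta$ and invoking the compatibility identity yields a cocycle-type relation tying $\tau_\alpha$, $\tau_\beta$ and $\tau_{\alpha+\beta}$ together. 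A direct computation with these relations, again leaning on the nondegeneracy of $\varphi$, forces each $\tau_\alpha$ to be a scalar multiple of $\tau$, say $\tau_\alpha=\chi(\alpha)\tau$ with $\chi(\alpha)\in\mathbb{F}^\times$, and the same relation gives $\chi(\alpha+\beta)=\chi(\alpha)\chi(\beta)$, i.e.\ $\chi$ is a character of $A$. This is exactly the asserted formula $\theta(t^\alpha\partial_\alpha)=\chi(\alpha)t^{\sigma(\alpha)}\tau(\partial_\alpha)$. Finally, a routine verification confirms that every triple $(\chi,\sigma,\tau)$ satisfying $\varphi(\tau(\partial),\sigma(\gamma))=\varphi(\partial,\gamma)$ does define an automorphism, so the description is exhaustive.
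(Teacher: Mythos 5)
First, a point of calibration: the paper does not prove Theorem~\ref{thm22} at all --- it is imported verbatim from Djokovic--Zhao \cite{DZ}, Section~5, so your attempt can only be compared with the original proof there. Your sketch reconstructs essentially that argument and in the right order: characterize $T$ intrinsically via ad-local-finiteness so that $\theta(T)=T$; observe that the joint weight spaces of $\mathrm{ad}(T)$ are exactly the $W_\alpha$ (distinct $\alpha$ giving distinct weight functionals $\partial\mapsto\varphi(\partial,\alpha)$ by nondegeneracy), whence $\theta(W_\alpha)=W_{\sigma(\alpha)}$; extract the compatibility identity $\varphi(\tau(\partial),\sigma(\gamma))=\varphi(\partial,\gamma)$ by matching weights; and finally show the degreewise maps $\tau_\alpha$ in $\theta(t^\alpha\partial)=t^{\sigma(\alpha)}\tau_\alpha(\partial)$ differ from $\tau=\tau_0$ by a character. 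This is the standard, and correct, route.

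Three spots deserve comment. (i) Your worry about small fields and infinitely generated $A$, and the proposed fixes (exhaustion, scalar extension to $\overline{\mathbb{F}}$), misdiagnose the difficulty: characteristic $0$ already makes $\mathbb{F}$ infinite, and since $A$ is torsion-free abelian it admits a total order in which a prescribed nonzero degree is positive. One then runs a leading-term argument: if $x\notin T$ has top component $t^\alpha\partial_\alpha$ with $\alpha>0$, the degree-$(k\alpha+\beta)$ component of $\mathrm{ad}(x)^k(t^\beta\partial')$ equals $\mathrm{ad}(t^\alpha\partial_\alpha)^k(t^\beta\partial')$, and a suitable homogeneous test vector (chosen using nondegeneracy; characteristic $0$ keeps the accumulating scalar factors nonzero) makes this nonzero for all $k$, so $\mathrm{ad}(x)$ is not locally finite. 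No base change or limit argument is needed. (ii) Your derivation of additivity of $\sigma$ from $[W_\alpha,W_\beta]\subseteq W_{\alpha+\beta}$ ``nonzero for suitable arguments'' fails verbatim in one case: when $\dim T=1$ one has $[W_\alpha,W_\alpha]=0$, so the bracket only yields $\sigma(\alpha+\beta)=\sigma(\alpha)+\sigma(\beta)$ for $\alpha\neq\beta$, and $\sigma(2\alpha)=2\sigma(\alpha)$ must be recovered separately, e.g.\ by writing $2\alpha=(\alpha+\gamma)+(\alpha-\gamma)$ with $\gamma\neq 0$. This is a one-line repair but is a genuine oversight as stated. (iii) The step you dispose of as ``a direct computation'' --- that the cocycle relation forces $\tau_\alpha=\chi(\alpha)\tau$ with $\chi$ multiplicative --- is the fiddly heart of the argument in \cite{DZ}; a workable scheme is to bracket degrees $\alpha$ and $-\alpha$ so the result lands in $T$ and to exploit $\varphi(\tau(\partial),\sigma(\gamma))=\varphi(\partial,\gamma)$ together with nondegeneracy to force each $\tau^{-1}\tau_\alpha$ to be scalar, but as written this is asserted rather than proved, and it is the one substantive omission in your proposal. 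With (ii) patched and (iii) carried out, your outline is a faithful reconstruction of the cited proof.
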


We need the following result several times later.

\begin{lemma}\label{lem23} \textit{
Let $n\in \mathbb{N}$, and let  $k_1,\ldots,k_n,k'_1,\ldots,k'_n\in \mathbb{N}+1$ be such that any two of them are relatively prime. Let $S\subset S'=\{k_1\epsilon_1, k'_1\epsilon_1, \ldots, k_n\epsilon_n, k'_n\epsilon_n\}$  such that $S$ involves all $\epsilon_i$. If  $\sigma\in \rm{Aut}(\mathbb{Z}^n)$ satisfying $\sigma(S)\subset S'$, then $\sigma=\rm{id}_{\mathbb{Z}^n}$, the identity mapping on ${\mathbb{Z}^n}$.}
\end{lemma}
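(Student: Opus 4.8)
The plan is to exploit two features of the set $S'$: every one of its members is a positive integer multiple of a \emph{single} standard basis vector $\epsilon_i$, and the admissible coefficients are severely restricted by the pairwise coprimality. The whole argument then reduces to a divisibility observation forced on $\sigma$ by the fact that it preserves the integer lattice $\mathbb{Z}^n$, not merely the rational span.

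First I would record the elementary remark that the $2n$ integers $k_1,\ldots,k_n,k'_1,\ldots,k'_n$ are pairwise distinct. Indeed each lies in $\mathbb{N}+1$, so each is $\geq 2$, and any two are relatively prime; were two of them equal to a common value $a$, their gcd would be $a\geq 2>1$, a contradiction. Hence every value occurs at most once among the $2n$ coefficients, which is what will allow me to recover an index from a coefficient.

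Next, fix an index $i$. Since $S$ involves all $\epsilon_i$, there is a coefficient $c\in\{k_i,k'_i\}$ with $c\epsilon_i\in S$. Because $\sigma(S)\subset S'$ and every element of $S'$ has the form $c'\epsilon_j$ with $c'\in\{k_j,k'_j\}$, I may write $\sigma(c\epsilon_i)=c'\epsilon_j$ for some $j$ and some such $c'$. Linearity gives $\sigma(\epsilon_i)=(c'/c)\epsilon_j$, and since $\sigma\in\mathrm{Aut}(\mathbb{Z}^n)$ the vector $\sigma(\epsilon_i)$ has integer entries; therefore $c'/c\in\mathbb{Z}$, that is, $c\mid c'$. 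Now coprimality finishes it: if $c\neq c'$ they are two distinct members of the list, so $\gcd(c,c')=1$, whereas $c\mid c'$ forces $\gcd(c,c')=c\geq 2$ — a contradiction. Hence $c=c'$. As this common value occurs only once among all $2n$ coefficients, and it appears both as a coefficient of $\epsilon_i$ and of $\epsilon_j$, we must have $i=j$, whence $\sigma(\epsilon_i)=\epsilon_i$. Letting $i$ range over $\{1,\ldots,n\}$ yields $\sigma=\mathrm{id}_{\mathbb{Z}^n}$.

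I do not expect a serious obstacle, but two points must not be glossed over. The divisibility step $c\mid c'$ genuinely uses that $\sigma$ preserves $\mathbb{Z}^n$ (an automorphism of $\mathbb{Q}^n$ would not suffice to rule out rational ratios), and the hypothesis ``pairwise coprime and $\geq 2$'' is used twice and essentially: once to upgrade $c\mid c'$ to $c=c'$, and once (via distinctness of the coefficients) to recover $i=j$ from the coincidence of coefficients. Dropping either hypothesis would permit a nontrivial permutation or rescaling, so the argument is tight.
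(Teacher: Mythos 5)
Your proof is correct and takes essentially the same route as the paper's: both arguments apply $\sigma$ to a multiple $c\epsilon_i\in S$, use that $\sigma(\epsilon_i)=(c'/c)\epsilon_j$ must lie in $\mathbb{Z}^n$, and then invoke pairwise coprimality of the coefficients (all $\geq 2$) to rule out $c\neq c'$ and $i\neq j$. Your write-up merely makes explicit two steps the paper leaves implicit --- the pairwise distinctness of the $2n$ coefficients and the upgrade from $c\mid c'$ to $c=c'$ --- and avoids the paper's ``switch $k_j$ with $k'_j$'' normalization by working with a generic coefficient $c\in\{k_i,k'_i\}$.
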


\begin{proof} For each $i$ we know that $k_i\epsilon_i\in S$ or $k'_i\epsilon_i\in S$. We may assume that
$k_i\epsilon_i\in S$ for all $i$ after switching some $k_j$ with $k_j'$ if necessary. Then  $\sigma(k_i\epsilon_i)=k_j \epsilon_j$ or $\sigma(k_i\epsilon_i)=k'_j \epsilon_j$ for some $j$. If $k_i\sigma(\epsilon_i)=k'_j \epsilon_j$
We see that $$\sigma(\epsilon_i)=\frac{k'_j}{k_i} \epsilon_j\notin \mathbb{Z}^n$$ which is impossible. If $k_i\sigma(\epsilon_i)=k_j \epsilon_j$ with $i\ne j$ we see that $$\sigma(\epsilon_i)=\frac{k_j}{k_i} \epsilon_j\notin \mathbb{Z}^n$$ which is impossible neither.
Thus we deduce that $\sigma(k_i\epsilon_i)=k_i \epsilon_i$  for all $i$, i.e.,
 $\sigma(\epsilon_i)=\epsilon_i$ for all $i$. Therefore $\sigma= \rm{id}_{\mathbb{Z}^n}$.
\end{proof}

For $W(\mathbb{Z}^n,V_m,\varphi)$, let us fix an element
\begin{equation}\label{w_n}w_{n,m}= \sum_{i=1}^n (t^{k_i\epsilon_i}+ t^{k'_i\epsilon_i})\partial_i, \end{equation}
where $\partial_i= \partial_m$ for $ i\geq m,$ and $k_1,\ldots,k_n,k'_1,\ldots,k'_n\in \mathbb{N}+1$ such that any two of them are relatively prime. We prove the following property first.

\begin{lemma}\label{lem24} \textit{
If there is a subalgebra $W(\mathbb{Z}^n,V_m,\varphi)$ in $W(A,T,\varphi)$, and $\theta= \theta(\chi,\sigma,\tau)\in {\rm Aut}(W(A,T,\varphi))$ satisfying $\theta(w_{n,m})= w_{n,m}$, then $\theta\big|_{W(\mathbb{Z}^n,V_m,\varphi)}= \rm{id}_{W(\mathbb{Z}^n,V_m,\varphi)}$.
}
\end{lemma}

\begin{proof}
Since $\theta(w_{n,m})= w_{n,m}$, we see that $\sigma$ is a bijective mapping from the set $\{k_1\epsilon_1, k'_1\epsilon_1, \ldots, k_n\epsilon_n, k'_n\epsilon_n\}$ to itself. Using Lemma \ref{lem23} we see that $\sigma\big|_{\mathbb{Z}^n}= \rm{id}_{\mathbb{Z}^n}$.

By Theorem \ref{thm22}, we have $\tau(\partial_i)\in\mathbb{C}\partial_i$ and
$$\varphi(\tau(\partial_i), \alpha)= \varphi(\tau(\partial_i), \sigma(\alpha))= \varphi(\partial_i, \alpha),\ \forall \alpha\in \mathbb{Z}^n,$$
which implies that $\tau(\partial_i)= \partial_i$, $i=1,\ldots,m$, i.e. $\tau\big|_{V_m}= \rm{id}_{V_m}$. Therefore
$$\theta(w_{n,m})= \sum_{i=1}^n (\chi(\epsilon_i)^{k_i}t^{k_i\epsilon_i}+ \chi(\epsilon_i)^{k'_i}t^{k'_i\epsilon_i})\partial_i,$$
yielding that $\chi(\epsilon_i)^{k_i}= \chi(\epsilon_i)^{k'_i}= 1$. Since $k_i$ is relatively prime to $k'_i$, it follows that $\chi(\epsilon_i)= 1$, $i=1,\ldots,n$, i.e. $\chi(\mathbb{Z}^n)= 1$. So $\theta\big|_{W(\mathbb{Z}^n,V_m,\varphi)}= \rm{id}_{W(\mathbb{Z}^n,V_m,\varphi)}$.
\end{proof}

\section{Local and $2$-local automorphisms of simple $W(A,T,\varphi)$}

In this section we shall determine all $2$-local and local automorphisms of the simple generalized Witt algebras $W(A,T,\varphi)$ over a field $\mathbb{F}$ of characteristic $0$. For $W(\mathbb{Z}^n,V_m,\varphi)$, we take
\begin{equation}\label{w_n}w_{n,m}= \sum_{i=1}^n (t^{k_i\epsilon_i}+ t^{k'_i\epsilon_i})\partial_i,\ w'_{n,m}= \sum_{i=1}^n (t^{-k_i\epsilon_i}+ t^{-k'_i\epsilon_i})\partial_i, \end{equation}
where  $\partial_i= \partial_m$ for $ i\geq m,$ and $k_1,\ldots,k_n,k'_1,\ldots,k'_n\in \mathbb{N}+1$ such that any two of them are relatively prime.

Let $A'$ be a subgroup of $A$ and $T'$ be a subspace of $T$. We say that $(A', T')$ is a {\bf non-degenerate pair} if
$$\varphi(\partial, A')= 0 \text{ for }\partial\in T'\Rightarrow \partial= 0$$
and
$$\varphi(T', \alpha)= 0 \text{ for }\alpha\in A'\Rightarrow \alpha= 0.$$

The following linear algebra result looks trivial. But we do not have a reference in hand.

\begin{lemma}\label{lem31'} \textit{Let $\bar{A}$ be a finitely generated subgroup of $A$ and $\bar{T}$ be a finite-dimensional subspace of $T$. Then there exist a finitely generated subgroup $A'$ of $A$ and  a finite-dimensional subspace $T'$ of $T$ such that $(A', T')$ is a non-degenerate pair with $\bar{A}\subset A'$ and
$\bar{T}\subset T'$.
}
\end{lemma}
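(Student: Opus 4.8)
The plan is to secure the two requirements defining a non-degenerate pair one after the other, in an \emph{asymmetric} two-step enlargement; the whole difficulty is to arrange the second step so that it does not undo what the first achieved. Before starting I would record two separation facts that follow from simplicity of $W$ (Theorem~\ref{thm21}): since $\varphi$ is $\mathbb F$-linear in its first slot and nondegenerate, the map $T\to\operatorname{Hom}(A,\mathbb F)$, $\partial\mapsto\varphi(\partial,-)$, is $\mathbb F$-linearly injective, and the map $A\to T^{*}$, $\alpha\mapsto\varphi(-,\alpha)$, is injective as a homomorphism of groups. I would then use repeatedly the elementary observation that if $h_{1},\dots,h_{p}$ are $\mathbb F$-linearly independent in $\operatorname{Hom}(A,\mathbb F)$ (resp. in $T^{*}$), there exist $\alpha_{1},\dots,\alpha_{p}\in A$ (resp. $\partial_{1},\dots,\partial_{p}\in T$) with $\det(h_{i}(\alpha_{j}))\neq0$: the image of the $\mathbb F$-linear map $x\mapsto(h_{1}(x),\dots,h_{p}(x))$ must span $\mathbb F^{p}$, for otherwise an annihilating vector would furnish a nontrivial relation among the $h_{i}$.

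In the first step I would cure the first condition. Taking a basis $\partial^{(1)},\dots,\partial^{(p)}$ of $\bar T$, the functionals $\varphi(\partial^{(i)},-)$ are $\mathbb F$-independent, so the separation fact yields $\alpha_{1},\dots,\alpha_{p}\in A$ with $(\varphi(\partial^{(i)},\alpha_{j}))$ invertible. Putting $A'=\langle\bar A,\alpha_{1},\dots,\alpha_{p}\rangle$ (finitely generated, containing $\bar A$), any $\partial\in\bar T$ with $\varphi(\partial,A')=0$ must vanish, so the first condition holds for the pair $(A',\bar T)$.

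In the second step I would cure the second condition for $A'$ while preserving the first. The ``right radical'' $R=\{\alpha\in A':\varphi(\bar T,\alpha)=0\}$ is a subgroup of the finitely generated torsion-free group $A'$, hence free; let $\beta_{1},\dots,\beta_{m}$ be a basis. Because $A\hookrightarrow T^{*}$, the group $R$ injects into the finite-dimensional space $V=\operatorname{span}_{\mathbb F}\{\varphi(-,\beta_{1}),\dots,\varphi(-,\beta_{m})\}\subseteq T^{*}$. I would fix an $\mathbb F$-basis $f_{1},\dots,f_{d}$ of $V$ and, by the separation fact, pick $\partial_{1},\dots,\partial_{d}\in T$ with $f_{i}(\partial_{k})=\delta_{ik}$, and set $T'=\bar T+\operatorname{span}\{\partial_{1},\dots,\partial_{d}\}$. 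Writing $\varphi(-,\beta_{j})=\sum_{i}\lambda_{ji}f_{i}$, so that the $m\times d$ matrix $\Lambda=(\lambda_{ji})$ has full column rank $d$, a decomposition $\partial=\partial_{0}+\sum_{k}a_{k}\partial_{k}$ with $\partial_{0}\in\bar T$ gives $\varphi(\partial,\beta_{j})=\sum_{k}a_{k}\lambda_{jk}$; hence $\varphi(\partial,A')=0$ forces $\Lambda a=0$, so $a=0$ and $\partial=\partial_{0}\in\bar T$, and the first condition for $(A',\bar T)$ then yields $\partial=0$. Dually, $\alpha\in A'$ with $\varphi(T',\alpha)=0$ lies in $R$, and the vanishing $\varphi(\partial_{k},\alpha)=0$ for all $k$ translates into $\varphi(-,\alpha)=0$ in $V$, whence $\alpha=0$ by injectivity of $A\hookrightarrow T^{*}$. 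Thus $(A',T')$ is a non-degenerate pair with $\bar A\subset A'$ and $\bar T\subset T'$.

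The step I expect to be the genuine obstacle is precisely the interaction between the two conditions: enlarging $A$ to fix the first condition tends to reintroduce failures of the second, and enlarging $T$ to fix the second tends to spoil the first, so a naive ``repair each side'' argument loops. The resolution is to break the symmetry, curing the first condition by enlarging $A'$ and then curing the second by adjoining to $T'$ exactly a set of directions dual to a basis of $V=\operatorname{span}\{\varphi(-,\beta_{j})\}$; the full-column-rank of $\Lambda$ is what guarantees these new directions cannot create a fresh first-condition degeneracy. A secondary subtlety worth flagging is that $A$ is only a group, not an $\mathbb F$-vector space, so $\mathbb Z$-independence of the $\beta_{j}$ need not make the $\varphi(-,\beta_{j})$ $\mathbb F$-independent; this is handled by passing through $R\hookrightarrow V\subseteq T^{*}$ rather than attempting to invert a single $m\times m$ Gram matrix.
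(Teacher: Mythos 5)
Your proof is correct, but it takes a genuinely different route from the paper's. The paper argues by induction on the quantity $r(\bar A,\bar T)=\operatorname{rank}\bar A_0+\dim\bar T_0$, where $\bar T_0=\{\partial\in\bar T:\varphi(\partial,\bar A)=0\}$ and $\bar A_0=\{\alpha\in\bar A:\varphi(\bar T,\alpha)=0\}$ are the two radicals: at each step it adjoins a \emph{single} element (an $\alpha\in A$ with $\varphi(\bar T_0,\alpha)\neq 0$, or a $\partial\in T$ with $\varphi(\partial,\bar A_0)\neq 0$) and observes that $r$ strictly drops. The interference problem you flag as the genuine obstacle is exactly what makes the paper's induction work, though it is left implicit there: because the new $\alpha$ is chosen to pair nontrivially with the radical $\bar T_0$ (not merely with $\bar T$), any element $\gamma+k\alpha$ with $k\neq0$ of the enlarged group annihilating $\bar T$ would force $\varphi(\bar T_0,\alpha)=0$, so $\bar A_0$ cannot grow while $\dim\bar T_0$ drops by at least one; symmetrically in the other case. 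Your proof replaces this one-element-at-a-time descent by a two-pass explicit construction: you kill the left radical of $\bar T$ in a single stroke via the invertible matrix $\bigl(\varphi(\partial^{(i)},\alpha_j)\bigr)$, then kill the right radical $R$ by adjoining a family dual to a basis of $V=\operatorname{span}\{\varphi(-,\beta_j)\}$, with the full column rank of $\Lambda$ certifying that the first condition survives. What your version buys is explicit non-interference and quantitative control (at most $\dim\bar T$ new generators of $A'$ and at most $\operatorname{rank}R$ new dimensions of $T'$), together with a correct treatment of the subtlety that $\mathbb Z$-independence of the $\beta_j$ need not give $\mathbb F$-independence of the functionals $\varphi(-,\beta_j)$ --- a point the paper's approach sidesteps entirely; what the induction buys is brevity and full symmetry between the two sides. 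One cosmetic slip in your write-up: the evaluation map $x\mapsto(h_1(x),\dots,h_p(x))$ on $A$ is only additive, not $\mathbb F$-linear, but your argument uses only that the $\mathbb F$-span of its image is a subspace, so nothing breaks.
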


\begin{proof} Let $ \bar T_{0}=\{\partial\in \bar{T}:\varphi(\partial, \bar{A})= 0 \}$ and
$\bar{A}_0=\{\alpha \in \bar{A}:\varphi(\bar{T}, \alpha)= 0 \}$. We shall prove the statement by induction on $r(\bar{A}, \bar{T})=\text{rank}\bar{A}_0+\dim \bar{T}_{0}$.

This is clear for $r(\bar{A}, \bar{T})=0$. Suppose that it is true for $r(\bar{A}, \bar{T})\le k\in \mathbb{N}$ and now suppose that $r(S, B)=k+1$. We have two cases to consider.

If  $\text{dim}\bar{T}_{0}\ge 1$, there is $\alpha\in A$ such that $\varphi(\bar{T}_{0}, \alpha)\ne0$. Take $\bar{A}'=\bar{A}+\alpha\mathbb{Z}$ and $\bar{T}'=\bar{T}$ in this case.

If  $\text{rank}\bar{A}_0\ge 1$, there is $\partial \in T$ such that $\varphi(\partial, \bar{A}_0)\ne0$. Take $\bar{T}'=\bar{T}+\mathbb{F}\partial$ and $\bar{A}'=\bar{A}$ in this case.

We see that in both cases $r(\bar{A}', \bar{T}')\le k$. By induction hypothesis the lemma follows.
\end{proof}

\begin{lemma}\label{lem31} \textit{
If $x,y$ are elements of the simple generalized Witt algebra $W(A,T,\varphi)$, then there exits $W(\mathbb{Z}^n,V_m,\varphi)$ such that
$$x,y\in W(\mathbb{Z}^n,V_m,\varphi)< W(A,T,\varphi),$$
where $<$ means to be a subalgebra.
}
\end{lemma}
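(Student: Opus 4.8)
The plan is to exploit the fact that any element of $W(A,T,\varphi)$ has finite support and then to enlarge that support to a non-degenerate pair using the linear-algebra lemma already established. First I would write $x$ and $y$ as finite sums of terms $t^\alpha\partial$; let $\bar{A}$ be the subgroup of $A$ generated by the (finitely many) group elements $\alpha$ occurring in $x$ and $y$, and let $\bar{T}$ be the subspace of $T$ spanned by the (finitely many) directions $\partial$ occurring in them. Then $\bar{A}$ is a finitely generated subgroup and $\bar{T}$ is a finite-dimensional subspace, and by construction $x,y\in \mathbb{F}\bar{A}\otimes\bar{T}$.

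Next I would apply Lemma~\ref{lem31'} to $\bar{A}$ and $\bar{T}$ to obtain a finitely generated subgroup $A'\supseteq\bar{A}$ and a finite-dimensional subspace $T'\supseteq\bar{T}$ such that $(A',T')$ is a non-degenerate pair. Because $A$ is torsion-free (as noted after Theorem~\ref{thm21}) and $A'$ is finitely generated, $A'$ is free of some finite rank $n$, so $A'\cong\mathbb{Z}^n$; set $m=\dim T'$. The first non-degeneracy condition makes the map $T'\to\mathrm{Hom}(A',\mathbb{F})$, $\partial\mapsto\varphi(\partial,-)$, injective, which forces $m\leq n$, matching the constraint recorded earlier for $W(\mathbb{Z}^n,V_m,\varphi)$.

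It then remains to identify $\mathbb{F}A'\otimes T'$, equipped with $\varphi$ restricted to $T'\times A'$, as the desired subalgebra. The bracket formula gives $[t^\alpha\partial_\alpha,t^\beta\partial_\beta]=t^{\alpha+\beta}(\varphi(\partial_\alpha,\beta)\partial_\beta-\varphi(\partial_\beta,\alpha)\partial_\alpha)\in\mathbb{F}A'\otimes T'$ whenever $\alpha,\beta\in A'$ and $\partial_\alpha,\partial_\beta\in T'$, since $A'$ is a subgroup and $T'$ a subspace; hence $\mathbb{F}A'\otimes T'$ is a subalgebra. Assuming $x,y$ are not both zero (the zero case being trivial), $\bar{T}\neq 0$, so $T'\neq 0$, and non-degeneracy forces $A'\neq 0$; thus by Theorem~\ref{thm21} the restricted form is non-degenerate with $A'\neq 0$, so this subalgebra is a simple generalized Witt algebra. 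After fixing a basis $\{\epsilon_1,\ldots,\epsilon_n\}$ of $A'\cong\mathbb{Z}^n$ and a basis of $T'\cong V_m$, it is precisely of the form $W(\mathbb{Z}^n,V_m,\varphi)$ and contains $x$ and $y$.

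Finally, I expect the only genuine content to lie in Lemma~\ref{lem31'}, which supplies the non-degenerate enlargement; the remaining steps are bookkeeping. The main points to be careful about are that the bracket stays inside $\mathbb{F}A'\otimes T'$ (immediate from the bracket formula), that $A'$ is free of finite rank (from torsion-freeness of $A$), and that the trivial-support edge case does not yield $A'=0$ (handled because non-degeneracy with $T'\neq 0$ forces $A'\neq 0$).
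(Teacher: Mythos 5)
Your proposal is correct and follows essentially the same route as the paper's proof: both pass from the finite supports of $x$ and $y$ to the finitely generated pair $(\bar{A},\bar{T})$, enlarge it via Lemma~\ref{lem31'} to a non-degenerate pair $(A',T')$, and identify $W(A',T',\varphi)\cong W(\mathbb{Z}^n,V_m,\varphi)$ as a simple subalgebra containing $x$ and $y$. Your extra verifications (closure of the bracket on $\mathbb{F}A'\otimes T'$, freeness of $A'$ from torsion-freeness of $A$, the bound $m\leq n$, and the zero edge case) only make explicit what the paper leaves implicit.
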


\begin{proof}
Suppose that $x= \sum_{\alpha\in S}t^\alpha \partial_\alpha$ and $y= \sum_{\alpha\in S'}t^\alpha \partial'_\alpha$, where $S,S'$ are finite subsets of $A$. Let
$\bar{A}$ be the subgroup generated by $S$ and $S'$, $$\bar{T}=\text{span}\{ \partial_\alpha,  \partial'_{\alpha'}:
 \alpha\in S,  \alpha'\in S'\}.$$
By Lemma \ref{lem31'},   there exist    a   rank $n$ subgroup $A'$ of $A$ and  an $m$-dimensional subspace $T'$ of $T$ such that $(A', T')$ is a non-degenerate pair with $\bar{A}\subset A'$ and
$\bar{T}\subset T'$. We have the simple Lie algebra $ W(A',T',\varphi)\cong W(\mathbb{Z}^n,V_m,\varphi)$ and
$$x,y\in W(A',T',\varphi)\cong W(\mathbb{Z}^n,V_m,\varphi)< W(A,T,\varphi),\ m,n\in\mathbb{N}.$$
\end{proof}

\begin{theorem}\label{thm31} \textit{Every invertible $2$-local automorphism
of the simple generalized Witt algebra $W(A,T,\varphi)$ is an automorphism.}
\end{theorem}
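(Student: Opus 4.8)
The plan is to reduce the global statement to a local one: I will show that for every pair $x,y\in W(A,T,\varphi)$ there is a single genuine automorphism of $W(A,T,\varphi)$ that coincides with $\Phi$ on all of a suitable finitely generated simple subalgebra containing $x$ and $y$. Once this is available, the desired structure of $\Phi$ follows formally. Indeed, by Lemma \ref{lem31} the elements $x,y$ lie in a simple subalgebra $W'=W(\mathbb{Z}^n,V_m,\varphi)<W(A,T,\varphi)$, and then so do $x+y$, $\lambda x$ and $[x,y]$. If $\Phi|_{W'}=\theta|_{W'}$ for one automorphism $\theta$, then $\Phi(x+y)=\theta(x+y)=\Phi(x)+\Phi(y)$, $\Phi(\lambda x)=\lambda\Phi(x)$, and $\Phi([x,y])=[\Phi(x),\Phi(y)]$. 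Since $x,y$ are arbitrary, this makes $\Phi$ a linear Lie homomorphism, and invertibility then promotes it to an automorphism.

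The heart of the argument is the local step, which exploits the rigidity encoded in Lemma \ref{lem24}. Fix the subalgebra $W'=W(\mathbb{Z}^n,V_m,\varphi)$ produced above, choose pairwise coprime integers $k_1,\dots,k_n,k'_1,\dots,k'_n\in\mathbb{N}+1$, and form the special elements $w_{n,m},w'_{n,m}\in W'$. Applying the $2$-local property to the pair $(w_{n,m},w'_{n,m})$ yields an automorphism $\theta_0$ of $W(A,T,\varphi)$ with $\theta_0(w_{n,m})=\Phi(w_{n,m})$ and $\theta_0(w'_{n,m})=\Phi(w'_{n,m})$. I then replace $\Phi$ by $\Psi=\theta_0^{-1}\circ\Phi$; a one-line check shows $\Psi$ is again a $2$-local automorphism (its witness on a pair is $\theta_0^{-1}$ composed with a witness for $\Phi$), and now $\Psi(w_{n,m})=w_{n,m}$.

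To pin down $\Psi$ on $W'$, take any $z\in W'$ and apply the $2$-local property of $\Psi$ to the pair $(w_{n,m},z)$: there is an automorphism $\eta_z$ of $W(A,T,\varphi)$ with $\eta_z(w_{n,m})=\Psi(w_{n,m})=w_{n,m}$ and $\eta_z(z)=\Psi(z)$. Since $\eta_z$ fixes $w_{n,m}$, Lemma \ref{lem24} forces $\eta_z|_{W'}=\mathrm{id}_{W'}$, whence $\Psi(z)=\eta_z(z)=z$. Thus $\Psi|_{W'}=\mathrm{id}_{W'}$, i.e. $\Phi|_{W'}=\theta_0|_{W'}$, which is exactly the single-automorphism agreement required in the first paragraph.

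Most of the genuine work is already carried by the preparatory results: Lemma \ref{lem31} (so that any two elements sit inside a finitely generated simple sub–Witt algebra) and, above all, Lemma \ref{lem24} (so that fixing the one element $w_{n,m}$ pins an automorphism to the identity on the whole of $W'$). Given these, the only point demanding care is that the normalizing automorphism $\theta_0$ depends on the chosen subalgebra, hence on the pair $(x,y)$; this is harmless because additivity, homogeneity and bracket-preservation are each conditions involving only finitely many elements lying in a common $W'$, so per-pair agreement with one automorphism suffices. I expect the main obstacle to be purely bookkeeping: making the identification of the abstract subalgebra $W(A',T',\varphi)$ from Lemma \ref{lem31} with the standard model $W(\mathbb{Z}^n,V_m,\varphi)$ precise enough that the special element $w_{n,m}$ and the hypothesis of Lemma \ref{lem24} genuinely apply. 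Finally, invertibility is used only at the last step, to upgrade the linear Lie homomorphism $\Phi$ to an automorphism, since surjectivity is not otherwise guaranteed by the local agreements.
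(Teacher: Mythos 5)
Your proposal is correct and follows essentially the same route as the paper: normalize $\Phi$ by a witness automorphism fixing $w_{n,m}$, then use Lemma \ref{lem24} on pairs $(w_{n,m},z)$ to force the normalized map to be the identity on the common subalgebra $W(\mathbb{Z}^n,V_m,\varphi)$, and conclude linearity and bracket-preservation pairwise. The only cosmetic difference is that you invoke the $2$-local property on the pair $(w_{n,m},w'_{n,m})$ where the paper uses $(w_{n,m},w_{n,m})$ --- the element $w'_{n,m}$ is needed only in the local (not $2$-local) automorphism theorem, so your extra condition is harmless but unnecessary.
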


\begin{proof}
Suppose that $\Phi$ is an invertible $2$-local automorphism of simple $W(A,T,\varphi)$. From Lemma \ref{lem31}, for any $x,y\in W(A,T,\varphi)$ there is  a simple subalgebra $$W(\mathbb{Z}^n,V_m,\varphi)<W(A,T,\varphi),\,\,m,n\in \mathbb{N}$$ such that $x,y\in W(\mathbb{Z}^n,V_m,\varphi)$.
For $w_{n,m}$ defined in (\ref{w_n}), there exists an automorphism $\theta_{w_{n,m},w_{n,m}}$ on $W(A,T,\varphi)$ such that $\Phi(w_{n,m})= \theta_{w_{n,m},w_{n,m}}(w_{n,m})$. Let $\Phi'= \theta_{w_{n,m},w_{n,m}}^{-1}\circ\Phi$,
then $\Phi'$ is a $2$-local automorphism of $W(A,T,\varphi)$ such that $\Phi'(w_{n,m})= w_{n,m}$. For any $z\in W(\mathbb{Z}^n,V_m,\varphi)$, there exists an automorphism $\theta_{w_{n,m},z}$ on $W(A,T,\varphi)$ such that $$w_{n,m}= \Phi'(w_{n,m})= \theta_{w_{n,m},z}(w_{n,m}),\text{ and } \Phi'(z)= \theta_{w_{n,m},z}(z).$$
It follows that $\theta_{w_{n,m},z}\big|_{W(\mathbb{Z}^n,V_m,\varphi)}= \rm{id}_{W(\mathbb{Z}^n,V_m,\varphi)}$ by Lemma \ref{lem24}, and thus $\Phi'(z)= z$, i.e. $\Phi'\big|_{W(\mathbb{Z}^n,V_m,\varphi)}=  \rm{id}_{W(\mathbb{Z}^n,V_m,\varphi)}$. Hence $$\Phi\big|_{W(\mathbb{Z}^n,V_m,\varphi)}= \theta_{w_{n,m},w_{n,m}}\big|_{W(\mathbb{Z}^n,V_m,\varphi)}.$$ It implies that $\Phi(cx+y)= c\Phi(x)+ \Phi(y)$, $c\in \mathbb{C}$ and $\Phi([x,y])= [\Phi(x), \Phi(y)]$. Therefore $\Phi$ is an automorphism of $W(A,T,\varphi)$.
\end{proof}

\begin{theorem}\label{thm32} \textit{Every invertible local automorphism
of the simple generalized Witt algebra $W(A,T,\varphi)$  over any field of characteristic $0$ is an automorphism.}
\end{theorem}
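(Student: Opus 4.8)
The plan is to reduce the statement to the situation of Theorem~\ref{thm31} and then replace the pairing device available for $2$-local maps by an argument based on linearity together with the rigidity of $\mathrm{Aut}\big(W(A,T,\varphi)\big)$ recorded in Theorem~\ref{thm22}. A local automorphism $\Phi$ is linear and (by hypothesis) invertible, so to prove it is an automorphism it suffices to verify $\Phi([x,y])=[\Phi(x),\Phi(y)]$ for all $x,y$. By Lemma~\ref{lem31} the three elements $x,y,[x,y]$ lie in a common simple subalgebra $S=W(\mathbb{Z}^n,V_m,\varphi)<W(A,T,\varphi)$, so it is enough to produce a single $\theta\in\mathrm{Aut}\big(W(A,T,\varphi)\big)$ with $\Phi|_S=\theta|_S$; then $\Phi([x,y])=\theta([x,y])=[\theta(x),\theta(y)]=[\Phi(x),\Phi(y)]$. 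I would fix inside $S$ the anchor $w:=w_{n,m}+w'_{n,m}$ introduced at the start of this section (with the $k_i,k'_i$ pairwise coprime), use locality to find $\theta$ with $\Phi(w)=\theta(w)$, and set $\Phi'=\theta^{-1}\circ\Phi$, a local automorphism with $\Phi'(w)=w$. The whole problem then becomes showing $\Phi'|_S=\mathrm{id}_S$.

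By linearity it suffices to treat a homogeneous $z=t^\gamma\partial\in S$, with $\gamma\in\mathbb{Z}^n$ and $\partial\in V_m$. Applying locality to $w+z$ yields $\eta=\theta(\chi,\sigma,\tau)\in\mathrm{Aut}\big(W(A,T,\varphi)\big)$ with
\[
w+\Phi'(z)=\eta(w+z)=\eta(w)+\eta(z).
\]
Since automorphisms send homogeneous elements to homogeneous elements (Theorem~\ref{thm22}), both $\Phi'(z)$ and $\eta(z)$ are single homogeneous terms, so $w-\eta(w)=\eta(z)-\Phi'(z)$ has at most two homogeneous components. As $w$ carries the $4n$ distinct exponents $\pm k_i\epsilon_i,\pm k'_i\epsilon_i$ and $\eta(w)$ carries their $\sigma$-images, a counting argument in the spirit of Lemmas~\ref{lem23} and \ref{lem24} shows that at most two of these can fail to match: if $\sigma$ moved some $\epsilon_i$, the pairwise coprimality of the $k_j,k'_j$ would force the terms $t^{\pm k_i\epsilon_i}\partial_i$ of $w$ together with their $\eta$-images to survive, producing at least four components---a contradiction. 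Hence $\sigma(\epsilon_i)=\epsilon_i$ for all $i$, i.e. $\sigma|_{\mathbb{Z}^n}=\mathrm{id}$.

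With $\sigma|_{\mathbb{Z}^n}=\mathrm{id}$, I would compare coefficients of $t^{\pm k_i\epsilon_i}$ and $t^{\pm k'_i\epsilon_i}$: for each $i$ at least one of these exponents is distinct from $\gamma$ and from the exponent of $\Phi'(z)$, and at such an exponent $\beta$ the equation reads $\chi(\beta)\tau(\partial_i)=\partial_i$, so $\tau(\partial_i)\in\mathbb{F}\partial_i$. The compatibility $\varphi(\tau(\partial_i),\sigma(\epsilon_i))=\varphi(\partial_i,\epsilon_i)$ of Theorem~\ref{thm22}, together with $\varphi(\partial_i,\epsilon_j)=\delta_{ij}$, then pins $\tau(\partial_i)=\partial_i$ and forces $\chi(\beta)=1$; coprimality of $k_i$ and $k'_i$ gives $\chi(\epsilon_i)=1$, hence $\chi|_{\mathbb{Z}^n}=1$. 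Substituting back yields $\eta(w)=w$ and $\Phi'(z)=\eta(z)=z$. The negative anchor $w'_{n,m}$ is precisely what keeps this clean in the resonant cases $\gamma\in\{\pm k_i\epsilon_i,\pm k'_i\epsilon_i\}$: the companion term $t^{-\gamma}$ of $w$ is then non-resonant, and its coefficient equation still delivers $\chi(\gamma)=1$. This gives $\Phi'|_S=\mathrm{id}_S$, whence $\Phi|_S=\theta|_S$ and $\Phi$ preserves all brackets.

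The main obstacle I anticipate is exactly the rigidity step $\sigma|_{\mathbb{Z}^n}=\mathrm{id}$ together with the determination of $\tau$ and of the scalars $\chi$: one must rule out nontrivial permutations \emph{and} sign changes of the $\epsilon_i$ using only the ``at most two mismatched components'' bound, and one must control the resonant exponents where $\gamma$ meets the support of $w$. I expect these case distinctions---handled uniformly by taking the $k_i,k'_i$ pairwise coprime and by carrying both the positive anchor $w_{n,m}$ and the negative anchor $w'_{n,m}$---to be the technical heart of the argument. A secondary subtlety is that the counting yields $\sigma=\mathrm{id}$ only on $A'\cong\mathbb{Z}^n$ and not on all of $A$, which is why $\tau(\partial_i)\in\mathbb{F}\partial_i$ must first be read off from coefficient matching before the $\varphi$-compatibility is invoked to fix the scalar.
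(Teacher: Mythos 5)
Your overall architecture (pick an anchor element in the common subalgebra $S=W(\mathbb{Z}^n,V_m,\varphi)$ supplied by Lemma~\ref{lem31}, normalize $\Phi'=\theta^{-1}\circ\Phi$ so that the anchor is fixed, then test $\Phi'$ on anchor${}+z$ and invoke rigidity of the anchor's stabilizer) is exactly the paper's, but your choice of the single \emph{symmetric} anchor $w=w_{n,m}+w'_{n,m}$ destroys the rigidity step, and this is a genuine gap rather than a case analysis you can push through. The exponent set $\{\pm k_i\epsilon_i,\pm k'_i\epsilon_i\}$ is invariant under the sign change $\sigma(\epsilon_i)=-\epsilon_i$, and the coefficients can be made to match as well: since the $2n$ integers $k_1,\dots,k_n,k'_1,\dots,k'_n$ are pairwise coprime, at most one of them is even, so for $n\ge 2$ there is always an index $i$ with $k_i,k'_i$ both odd. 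Already for $W_n$ with the standard pairing $\varphi(\partial_i,\epsilon_j)=\delta_{ij}$, take $\sigma(\epsilon_i)=-\epsilon_i$, $\tau(\partial_i)=-\partial_i$, $\chi(\epsilon_i)=-1$ (identity on all other coordinates); this triple is $\varphi$-compatible, so $\eta_0=\theta(\chi,\sigma,\tau)\in\mathrm{Aut}(W_n)$ by Theorem~\ref{thm22}, and for odd $k_i$ one computes $\eta_0(t^{k_i\epsilon_i}\partial_i)=(-1)^{k_i+1}t^{-k_i\epsilon_i}\partial_i=t^{-k_i\epsilon_i}\partial_i$, whence
$$\eta_0\bigl((t^{k_i\epsilon_i}+t^{k'_i\epsilon_i}+t^{-k_i\epsilon_i}+t^{-k'_i\epsilon_i})\partial_i\bigr)=(t^{-k_i\epsilon_i}+t^{-k'_i\epsilon_i}+t^{k_i\epsilon_i}+t^{k'_i\epsilon_i})\partial_i,$$
so $\eta_0(w)=w$ while $\eta_0\big|_S\neq\mathrm{id}_S$. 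Thus ``$w-\eta(w)$ has at most two homogeneous components'' does \emph{not} force $\sigma\big|_{\mathbb{Z}^n}=\mathrm{id}$ (here it has zero components with $\sigma\neq\mathrm{id}$), and your intermediate claim $\Phi'\big|_S=\mathrm{id}_S$ is in fact false: take $\Phi=\eta_0$ itself, an automorphism and hence a local automorphism, which already satisfies $\Phi(w)=w$, so the normalizing $\theta$ may be taken to be the identity. You correctly flagged ruling out sign changes as the anticipated obstacle, but with the symmetric anchor it cannot be overcome.

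The repair is the paper's asymmetric two-pass argument. Anchor first at $w_{n,m}$ alone: its exponents $\{k_i\epsilon_i,k'_i\epsilon_i\}$ are all positive multiples of the $\epsilon_i$, so Lemma~\ref{lem23} excludes sign flips outright ($-k_i\epsilon_i\notin S'$), and testing against $t^{\alpha}\partial_\alpha+w_{n,m}$ yields, via Lemma~\ref{lem24}, that $\Phi'(t^{\alpha}\partial_\alpha)=t^{\alpha}\partial_\alpha$ for all $\alpha\in\mathbb{Z}^n\setminus\cup_{i}\{k_i\epsilon_i,k'_i\epsilon_i\}$. Since every summand of $w'_{n,m}$ has such an exponent, linearity of $\Phi'$ gives $\Phi'(w'_{n,m})=w'_{n,m}$ for free, and a second pass with anchor $w'_{n,m}$ fixes the homogeneous elements at the remaining exponents. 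Your other steps --- the reduction of bracket-preservation to finding one $\theta$ with $\Phi\big|_S=\theta\big|_S$, the homogeneity of $\Phi'(z)$, and the coefficient comparison pinning $\tau(\partial_i)=\partial_i$ and $\chi(\epsilon_i)=1$ from coprimality of $k_i,k'_i$ --- match the paper and go through once the anchor is corrected.
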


\begin{proof}
Suppose that $\Phi$ is a local automorphism of $W(A,T,\varphi)$.  From Lemma \ref{lem31}, for any $x,y\in W(A,T,\varphi)$ there is  a simple subalgebra $$W(\mathbb{Z}^n,V_m,\varphi)<W(A,T,\varphi),\,\,m,n\in \mathbb{N}$$ such that $x,y\in W(\mathbb{Z}^n,V_m,\varphi)$.
For $w_{n,m}$ defined in (\ref{w_n}),   there exists an automorphism $\theta_{w_{n,m}}$ on $W(A,T,\varphi)$ such that $\Phi(w_{n,m})= \theta_{w_{n,m}}(w_{n,m})$. Let $\Phi'= \theta_{w_{n,m}}^{-1}\circ\Phi$, then $\Phi'$ is a local automorphism of $W(A,T,\varphi)$ such that $\Phi'(w_{n,m})= w_{n,m}$.

For any $t^\alpha \partial_\alpha\in W(\mathbb{Z}^n,V_m,\varphi)$ with $\alpha\in \mathbb{Z}^n\setminus\cup_{i=1}^n\{k_i\epsilon_i, k'_i\epsilon_i\}$ and $\partial_\alpha\in V_m$, there exist automorphisms of $W(A,T,\varphi)$:
$$\theta_{t^\alpha \partial_\alpha}= \theta_{t^\alpha \partial_\alpha}(\chi_{t^\alpha \partial_\alpha},\sigma_{t^\alpha \partial_\alpha},\tau_{t^\alpha \partial_\alpha}),$$
$$\theta_{t^\alpha \partial_\alpha+ w_{n,m}}= \theta_{t^\alpha \partial_\alpha+ w_{n,m}}(\chi_{t^\alpha \partial_\alpha+ w_{n,m}},\sigma_{t^\alpha \partial_\alpha+ w_{n,m}},\tau_{t^\alpha \partial_\alpha+ w_{n,m}})$$
such that
$$\Phi'(t^\alpha \partial_\alpha)= \theta_{t^\alpha \partial_\alpha}(t^\alpha \partial_\alpha),\ \Phi'(t^\alpha \partial_\alpha+ w_{n,m})= \theta_{t^\alpha \partial_\alpha+ w_{n,m}}(t^\alpha \partial_\alpha+w_{n,m}).$$
Then we have
\begin{equation}\label{w_n''}\aligned\theta_{t^\alpha \partial_\alpha}(t^\alpha \partial_\alpha)+ w_{n,m}&= \Phi'(t^\alpha \partial_\alpha)+ w_{n,m}= \Phi'(t^\alpha \partial_\alpha)+ \Phi'(w_{n,m})\\
&= \Phi'(t^\alpha \partial_\alpha+ w_{n,m})= \theta_{t^\alpha \partial_\alpha+ w_{n,m}}(t^\alpha \partial_\alpha+w_{n,m})\\
&= \theta_{t^\alpha \partial_\alpha+ w_{n,m}}(t^\alpha \partial_\alpha)+\theta_{t^\alpha \partial_\alpha+ w_{n,m}}(w_{n,m}).\endaligned\end{equation}
It is clear that $\sigma_{t^\alpha \partial_\alpha+ w_{n,m}}$ is a bijective mapping from $\{\alpha, k_1\epsilon_1, k'_1\epsilon_1, \ldots, k_n\epsilon_n, k'_n\epsilon_n\}$ to $\{\sigma_{t^\alpha \partial_\alpha}(\alpha), k_1\epsilon_1, k'_1\epsilon_1, \ldots, k_n\epsilon_n, k'_n\epsilon_n\}$. It implies that $\sigma_{t^\alpha \partial_\alpha+ w_{n,m}}$ maps
$$\{\alpha, k_1\epsilon_1, k'_1\epsilon_1, \ldots, k_n\epsilon_n, k'_n\epsilon_n\}\setminus\{\alpha, \sigma_{t^\alpha \partial_\alpha+ w_{n,m}}^{-1}(\sigma_{t^\alpha \partial_\alpha}(\alpha))\}$$
to
$$\{ k_1\epsilon_1, k'_1\epsilon_1, \ldots, k_n\epsilon_n, k'_n\epsilon_n\}.$$
Using Lemma \ref{lem23} we see that  $\sigma_{t^\alpha \partial_\alpha+ w_{n,m}}\big|_{\mathbb{Z}^n}= \rm{id}_{\mathbb{Z}^n}$. Since  $\alpha\in \mathbb{Z}^n\setminus\cup_{i=1}^n\{k_i\epsilon_i, k'_i\epsilon_i\}$, from (\ref{w_n''})  we see  that $\theta_{t^\alpha \partial_\alpha+ w_{n,m}}(t^\alpha \partial_\alpha)= \theta_{t^\alpha \partial_\alpha}(t^\alpha \partial_\alpha)$. From (\ref{w_n''}) we deduce that $\theta_{t^\alpha \partial_\alpha+ w_{n,m}}(w_{n,m})= w_{n,m}$.
Therefore $\theta_{t^\alpha \partial_\alpha+ w_{n,m}}\big|_{W(\mathbb{Z}^n,V_m,\varphi)}=  \rm{id}_{W(\mathbb{Z}^n,V_m,\varphi)}$ by Lemma \ref{lem24}, and thus $$\Phi'(t^\alpha \partial_\alpha)= t^\alpha \partial_\alpha,\forall \partial_\alpha\in V_m,
\alpha\in \mathbb{Z}^n\setminus\cup_{i=1}^n\{k_i\epsilon_i, k'_i\epsilon_i\}.$$

Now  $\Phi'(w'_{n,m})= w'_{n,m}$.
In the arguments of the last paragraph if we replace $w_{n,m}$ with $w'_{n,m}$ and make corresponding modifications, we can prove that $$\Phi'(t^\alpha \partial_\alpha)= t^\alpha \partial_\alpha,\forall \partial_\alpha\in V_m, \alpha\in \mathbb{Z}^n\setminus\cup_{i=1}^n\{-k_i\epsilon_i, -k'_i\epsilon_i\}.$$
So $\Phi'\big|_{W(\mathbb{Z}^n,V_m,\varphi)}=  \rm{id}_{W(\mathbb{Z}^n,V_m,\varphi)}$, and thus $\Phi\big|_{W(\mathbb{Z}^n,V_m,\varphi)}= \theta_{w_{n,m}}\big|_{W(\mathbb{Z}^n,V_m,\varphi)}$. It implies that $\Phi([x,y])= [\Phi(x), \Phi(y)]$. Therefore $\Phi$ is an automorphism.
\end{proof}

\begin{remark}\label{rem43} \textit{Local and 2-local automorphisms must be injective by   definition but may not be surjective. For example, define the following linear map
$$\aligned \Phi: &W_\infty\rightarrow W_\infty,\\
&t^{\alpha}\frac d{dt_i}  \mapsto t^{(0,\alpha)}\frac d{dt_{i+1}},\ i\in \mathbb{N},\alpha\in\mathbb Z^\infty.\endaligned $$
Then $\Phi$ is not an automorphism since it is not surjective. However, $\Phi$ is both a local and 2-local automorphism. Therefore, it is necessary that we demand that the maps are invertible in Theorems \ref{thm31} and \ref{thm32}. But these conditions can be removed in some cases.}
\end{remark}

\begin{corollary}\label{col32} \textit{Suppose that  the group $A$ is finitely generated. Then
\begin{itemize}
\item[(a).] Every $2$-local automorphism
of the simple generalized Witt algebra $W(A,T,\varphi)$ is an automorphism;
\item[(b).]
Every   local automorphism
of the simple generalized Witt algebra $W(A,T,\varphi)$ is an automorphism.\end{itemize}}
\end{corollary}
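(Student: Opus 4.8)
The plan is to reduce both statements to the arguments already carried out in Theorems \ref{thm31} and \ref{thm32}, the point being that finite generation of $A$ makes the invertibility hypothesis superfluous. First I would record the structural consequence of the hypothesis: since $W(A,T,\varphi)$ is simple, $A$ is torsion-free, and being finitely generated it is isomorphic to $\mathbb{Z}^n$ for some $n\in\mathbb{N}$. Nondegeneracy of $\varphi$ forces the assignment $\partial\mapsto\varphi(\partial,-)$ to embed $T$ into $\mathrm{Hom}(\mathbb{Z}^n,\mathbb{F})\cong\mathbb{F}^n$, so $T=V_m$ is finite-dimensional with $m\le n$. Hence $W(A,T,\varphi)$ is itself one of the algebras $W(\mathbb{Z}^n,V_m,\varphi)$, and in particular the distinguished elements $w_{n,m}$ and $w'_{n,m}$ of (\ref{w_n}) already lie in $W(A,T,\varphi)$.

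For part (a) I would run the proof of Theorem \ref{thm31} verbatim, except that I take the distinguished subalgebra to be the whole algebra $W=W(\mathbb{Z}^n,V_m,\varphi)$ instead of a proper one furnished by Lemma \ref{lem31}. Explicitly, the $2$-local property applied at the pair $(w_{n,m},w_{n,m})$ gives an automorphism $\theta:=\theta_{w_{n,m},w_{n,m}}$ with $\Phi(w_{n,m})=\theta(w_{n,m})$; putting $\Phi'=\theta^{-1}\circ\Phi$ we have $\Phi'(w_{n,m})=w_{n,m}$. For an arbitrary $z\in W$, applying the $2$-local property at $(w_{n,m},z)$ together with Lemma \ref{lem24} forces $\Phi'(z)=z$. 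As $z$ was arbitrary, $\Phi'=\mathrm{id}_W$, so $\Phi=\theta$ is an automorphism.

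Part (b) is handled the same way, following the proof of Theorem \ref{thm32} with the distinguished subalgebra again taken to be all of $W$. After normalizing by an automorphism to arrange $\Phi'(w_{n,m})=w_{n,m}$, the argument there first shows that $\Phi'$ fixes every $t^\alpha\partial_\alpha$ with $\alpha\notin\bigcup_i\{k_i\epsilon_i,k'_i\epsilon_i\}$; by linearity this gives $\Phi'(w'_{n,m})=w'_{n,m}$, and repeating the argument with $w'_{n,m}$ in place of $w_{n,m}$ fixes every $t^\alpha\partial_\alpha$ with $\alpha\notin\bigcup_i\{-k_i\epsilon_i,-k'_i\epsilon_i\}$. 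Since these two families of exponents together exhaust $\mathbb{Z}^n$, we conclude $\Phi'=\mathrm{id}_W$, whence $\Phi$ is an automorphism.

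The one conceptual point, which explains why invertibility can be dropped, deserves to be isolated. In Theorems \ref{thm31} and \ref{thm32} the subalgebra provided by Lemma \ref{lem31} depends on the chosen pair $x,y$, so those arguments only show that $\Phi$ agrees with \emph{some} automorphism on each finite-rank subalgebra; this yields linearity and bracket-preservation but not surjectivity, which is precisely why invertibility had to be imposed by hand. When $A$ is finitely generated no such local-to-global gluing is needed, because a single subalgebra — namely $W$ itself — already contains everything, so $\Phi$ coincides globally with one fixed automorphism and is automatically bijective. I anticipate no real obstacle; the only step requiring a little care is checking that $T$ is finite-dimensional, which as noted is immediate from nondegeneracy of $\varphi$.
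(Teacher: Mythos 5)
Your proposal is correct and takes essentially the same route as the paper: the paper's proof likewise observes that finite generation gives $A\simeq\mathbb{Z}^n$ and $\dim T=m\le n$, so $W(A,T,\varphi)=W(\mathbb{Z}^n,V_m,\varphi)$ can be taken directly as the subalgebra in the proofs of Theorems \ref{thm31} and \ref{thm32}, making the invertibility hypothesis unnecessary. Your explicit remark that $\Phi$ then coincides globally with a single automorphism (whence bijectivity is automatic) is exactly the point the paper leaves implicit.
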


\begin{proof} We may assume that $A\simeq \mathbb{Z}^n$ for some $n\in \mathbb{N}$. Then $\dim T=m\le n$. We see that $W(A,T,\varphi)= W(\mathbb{Z}^n,V_m,\varphi)$ for some $m$-dimensional vector space $V_m$. We may take $W(A,T,\varphi)$ directly as the subalgebra in the proofs of Theorems \ref{thm31} and \ref{thm32}, and thus the condition ``invertible'' is not needed in this case. The results follow.
\end{proof}

A special case of the above corollary is the following

\begin{corollary}\label{col32} \textit{ For any $n\in \mathbb{N}$, every $2$-local automorphism (or local automorphism)
of the Witt algebra  $W_n$    over a field of characteristic $0$    is an automorphism.}
\end{corollary}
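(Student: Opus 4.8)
The plan is to recognize $W_n$ as a special instance of the simple generalized Witt algebras $W(A,T,\varphi)$ already handled, so that the statement reduces at once to the preceding corollary. First I would recall, from the identifications set up in the previous section, that $W_n=\mathrm{Der}(\mathbb{F}[t_1^{\pm1},\ldots,t_n^{\pm1}])$ is precisely $W(\mathbb{Z}^n,V_n,\varphi)$: here $A=\mathbb{Z}^n$ with standard basis $\{\epsilon_1,\ldots,\epsilon_n\}$, the Cartan part $T=V_n$ has basis $\{\partial_1,\ldots,\partial_n\}$ (with $\partial_i$ viewed as $t_i\,\partial/\partial t_i$), and the pairing is $\varphi(\partial_i,\epsilon_j)=\delta_{ij}$.

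Next I would check the two hypotheses needed to invoke the preceding corollary. Simplicity follows from Theorem~\ref{thm21}: the pairing $\varphi(\partial_i,\epsilon_j)=\delta_{ij}$ is nondegenerate, since $\varphi(\partial,\alpha)=0$ for all $\partial$ forces each coordinate of $\alpha$ to vanish and likewise in the other variable, and $A=\mathbb{Z}^n\neq 0$. The grading group $A=\mathbb{Z}^n$ is finitely generated, which is the only extra requirement of that corollary. With both points in hand, I would simply apply parts~(a) and~(b) of the preceding corollary to conclude that every $2$-local automorphism and every local automorphism of $W_n$ is an automorphism, with no invertibility assumption needed.

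I expect no genuine obstacle: all the real work is already carried by Lemmas~\ref{lem23} and~\ref{lem24} together with Theorems~\ref{thm31} and~\ref{thm32}, and this corollary is only a specialization. The single point meriting care is to confirm that the whole of $W_n$ — not merely a proper subalgebra — is of the form $W(\mathbb{Z}^n,V_n,\varphi)$, i.e.\ that here the rank $n$ of $A$ equals $m=\dim T$; this is exactly what lets one take the subalgebra in the proofs of Theorems~\ref{thm31} and~\ref{thm32} to be $W_n$ itself, and thereby drop the invertibility hypothesis.
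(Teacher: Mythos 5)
Your proposal is correct and takes essentially the same route as the paper, which states this result as an immediate special case of the preceding corollary on finitely generated $A$: since $W_n=W(\mathbb{Z}^n,V_n,\varphi)$ is simple (by Theorem~\ref{thm21}, as $\varphi(\partial_i,\epsilon_j)=\delta_{ij}$ is nondegenerate) and $\mathbb{Z}^n$ is finitely generated, parts (a) and (b) of that corollary apply directly. Your closing observation --- that taking the subalgebra in the proofs of Theorems~\ref{thm31} and~\ref{thm32} to be all of $W_n$ is what removes the invertibility hypothesis --- is exactly the point made in the paper's proof of the finitely generated case.
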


\vspace{2mm}
\noindent
{\bf Acknowledgments. } This research is partially supported by NSFC (11871190) and NSERC (311907-2015).

\end{document}